\documentclass[journal]{IEEEtran}
%



\usepackage{graphicx}
\usepackage{amsmath}
\usepackage{amssymb}
\usepackage{cite}

\newtheorem{theorem}{Theorem}[section]
\newtheorem{lemma}[theorem]{Lemma}

\newtheorem{remark}{Remark}
\newtheorem{assumption}{Assumption}

\newenvironment{pfof}[1]{\vspace{1ex}\noindent{\itshape Proof of
    #1:}\hspace{0.5em}} {\hfill\oprocend\vspace{1ex}}
\newenvironment{proof}[1]{\vspace{1ex}\noindent{\itshape Proof:}\hspace{0.5em}} {\hfill\oprocend\vspace{1ex}}



\renewcommand{\d}{\mathrm{d}}

\newcommand{\union}{\operatorname{\cup}}

\newcommand{\real}{\mathbb{R}}

\newcommand\oprocendsymbol{\hbox{$\square$}}
\newcommand\oprocend{\relax\ifmmode\else\unskip\hfill\fi\oprocendsymbol}

\newcommand{\vect}[1]{\mathbbold{#1}}
\newcommand{\vones}[1][]{\vect{1}_{#1}}
\newcommand{\vzeros}[1][]{\vect{0}_{#1}}
\DeclareSymbolFont{bbold}{U}{bbold}{m}{n}
\DeclareSymbolFontAlphabet{\mathbbold}{bbold}

\usepackage{enumerate}
\usepackage{xcolor}

\newcommand{\tb}{}


\graphicspath{{./Figures/}}

\ifCLASSINFOpdf
\else
\fi

\graphicspath{{./Figures/}}

\hyphenation{op-tical net-works semi-conduc-tor}

\begin{document}
%
\title{Distributed Monitoring of Voltage Collapse Sensitivity Indices}
%
%
%

\author{John~W.~Simpson-Porco,~\IEEEmembership{Member,~IEEE,}
        and~Francesco~Bullo,~\IEEEmembership{Fellow,~IEEE}
\thanks{J.~W.~Simpson-Porco is with the Department of Electrical and Computer Engineering, University of Waterloo, Waterloo ON, N2L 3G1 Canada. Correspondance: {\texttt{jwsimpson@uwaterloo.ca}}. F.~Bullo is with the Center for Control, Dynamical Systems and Computation, University of California, Santa Barbara, CA 93106 USA. This work was supported by NSF CNS-1135819 and the Natural Science and Engineering Research Council of Canada.}
}

%
%

\markboth{}%
{Shell \MakeLowercase{\textit{et al.}}: Bare Demo of IEEEtran.cls for Journals}
%



\maketitle

\begin{abstract}
The assessment of voltage stability margins is a promising direction for wide-area monitoring systems. 
Accurate monitoring architectures for long-term voltage instability are typically centralized and lack scalability, while completely decentralized approaches relying on local measurements tend towards inaccuracy. 
Here we present {distributed linear} algorithms for the online computation of voltage collapse sensitivity indices. 
The computations are {collectively} performed by processors embedded at each bus in the smart grid, using synchronized phasor measurements and communication of voltage phasors {between neighboring} buses. 
{Our algorithms provably converge to the proper index values, as would be calculated using centralized information, but but do not require any central decision maker for coordination.} 
Modifications of the algorithms to account for generator reactive power limits are discussed. We illustrate the effectiveness of our designs with a case study of the New England 39 bus system.
\end{abstract}



\begin{IEEEkeywords}
voltage stability, wide-area monitoring and control, voltage collapse
\end{IEEEkeywords}


\section{Introduction}
\label{Sec:Introduction}

Power grids are transitioning from a paradigm of centralized monitoring and control to one based on decentralized decisions and consumer interaction. 
When coupled with waining infrastructure investment, rapidly growing urban load centers, and the wide-spread adoption of intermittent distributed generation, this structural shift will lead to a broader and more uncertain range of operating conditions for the grid and an erosion of system stability margins if not properly coordinated. 

Simultaneously however, the rise of cheap and ubiquitous sensing, communication, and computational capability suggests a future where the physical grid is strongly coupled to many accompanying layers of data and control.
Unlike classic grids where sparsely available information is telemetered to a control center and processed, copious amounts of information are distributed throughout the smart grid along with the computational capabilities to process measured data and make coordinated decisions in real-time for wide-area monitoring, protection, and control (WAMPAC) \cite{VT-GV-DC-PR-VM-JF-SS-MMB-AP:11,AC-PPK:13}.
The distributed nature of these smart grid resources suggests we explore and evaluate the effectiveness of different information architectures for WAMPAC, ranging the spectrum from centralized to decentralized.

In this article we consider the problem of online monitoring for long-term voltage instability (LTVI) within a smart grid, which has recently been identified as an area of urgent interest for industry \cite{DN-VM-BB-VK-JC:08}.
While in general voltage stability is a complex, multi time-scale phenomena, long-term voltage instability is a quasi-static bifurcation instability \cite{ID:92,CAC:95} associated with an inability of the combined generation/transmission system to transmit sufficient power to loads \cite{TVC-CV:98}. After an increase in load, or a disturbance such as generation failure or load shedding, the grid's long-term operating point can vanish leading to the tripping of protection equipment and potentially to a large-scale voltage collapse blackout \cite{TVC-CV:98}. Robustness margins against LTVI are quantified via \emph{voltage collapse proximity indicators} (VCPIs), which produce measures of distance to instability \cite{CAC:02}. Accurate indicator estimates are key for distinguishing vulnerable system conditions from stable conditions exhibiting low voltages \cite{MG-TVC:11}, and we now review various architectures for the calculation of such indicators.

%


\subsection{Monitoring of Long-Term Voltage Instability}

Many monitoring solutions for LTVI have been proposed, and can be broadly classified by architecture (centralized, decentralized, distributed), measurement rate/complexity (time-skewed SCADA vs.\ time-stamped phasor measurement unit data), and theoretical rigor (heuristic vs.\ exact) \cite{MG-TVC:11}. The most important distinction for our purposes is the first, which we now expand on.

%

{
\textbf{Centralized Monitoring:} In a centralized monitoring approach, relevant data is telemetered to a central computer in a control center and potentially combined with state estimation to calculate relevant indices; see \cite{BM-MB:03,MG-TVC:11,MG-TVC:09a,MG-TVC:09b} and the many references therein. From the perspective of this paper, the main drawback of a centralized approach is that it results in a single point of failure for the monitoring system, and potentially requires data to be sent over large distances. Data privacy issues may also come into consideration. Moreover, in emerging applications such as microgrids, centralized supervision may be untenable or prohibitive. In this case, a more modular, scalable monitoring approach is desirable.
}


\smallskip

\textbf{Decentralized Monitoring:}\footnote{We use the term decentralized here for what is sometimes called ``completely decentralized'' or ``completely distributed'' \textemdash{} the VCPI calculated at bus $i$ will depend only on information measured locally at bus $i$, {such as phasor voltage $V_i\angle \theta_i$ and complex power injection $P_i + \mathrm{j}Q_i$}.} In contrast to centralized monitoring, decentralized techniques rely only on locally measured information to estimate voltage stability margins. Monitoring techniques based on PMU data and/or Th\'{e}venin equivalent circuits were proposed in \cite{VB-TSS-MSS:04,YG-NS-AG:06,SC-GNT:08,YS-XW:09,LR-ID:14}, among others. Decentralized VCPIs offer low implementation complexity and easy scalability, with the additional advantage that (typically) no  communication or state estimation is required. The price paid for these advantages is accuracy: decentralized VCPIs are invariably heuristics, often inspired by single-line power flow results, and are always too optimistic since they do not explicitly account for coupling between buses \cite{CDV-NGS:07}. {A notable exception for decentralized approaches is \cite{CDV-NGS:07}, where sensitivities of voltages with respect to on-load tap changer ratios are used to monitor the system for instability.}


A monitoring approach for LTVI was presented in \cite{LX-YC-HL:12}, where the grid is partitioned into overlapping monitoring areas (akin to control areas in automatic generation control). The indicators used however are somewhat unconventional, and the approach relies on selecting monitoring areas which are only weakly coupled. {Neglecting this inter-area coupling then results in a centralized assessment problem within each area.} While being termed ``distributed'', in our terminology the approach in \cite{LX-YC-HL:12} is a hybrid of centralized and decentralized ideas. {In Remark \ref{Rem:AreaExtension} we comment on the extension of our results to similar multi-area monitoring architecture.}

\smallskip

\textbf{Distributed Monitoring:} In the intermediate between centralized and decentralized we arrive at distributed monitoring strategies, which rely on local measurements along with communicated data from directly adjacent {buses or areas of the grid.} Distributed strategies promise to combine the performance of centralized monitoring with the scalability of decentralized monitoring, requiring only sparse, localized communication without any global information. Reflecting this, the recent literature has witnessed a steady growth in the applications of distributed algorithms to power system monitoring and control, now including resource allocation \cite{ADG-CNH:11}, load shedding \cite{YX-WL-JG:11},
economic dispatch \cite{FD-JWSP-FB:13y}, optimal power flow \cite{ED-HZ-GBG:13},
voltage control \cite{BAR-CNH-ADDG:13},
transfer capability assessment \cite{JHL-CCC:15}, and inverter coordination \cite{JWSP-QS-FD-JMV-JMG-FB:13s} 
in microgrids. To the authors knowledge however, distributed algorithms have not yet been designed for the monitoring of VCPIs. {In particular, we focus on a subclass of VCPIs termed sensitivity indices, which quantify the sensitivity of grid states to changes in grid parameters. These sensitivities increase as voltage collapse is approached, and monitoring these sensitivities therefore provides information on the proximity to collapse.
}

\subsection{Contributions}

{In this work we present the first distributed algorithms for the online computation of voltage collapse sensitivity indices}. We demonstrate that the \emph{exact} calculation of several standard centralized indices can be distributed among agents embedded within the smart grid, achieving centralized performance through only local measurements and short-range nearest-neighbor communication. The exact nature of these agents is left unconstrained; the software could be embedded in next generation power inverters, power electronic devices for voltage control, or implemented at generators or smart meters. Our algorithms do not rely on a state estimator processing sparse measurements to estimate state variables, but rather combine direct local measurements with measurements made and communicated by neighboring buses. Data is transmitted only over short distances, {minimizing} communication problems such as packet delays and measurement problems such as time-stamp drift. {No centralized decision maker is required.} Our approach does not rely on any pre-defined interfaces, on any representative sets of offline data used for learning, or on any Th\'{e}venin equivalent representations. {After algorithm convergence, each bus recovers its exact sensitivity index along with the indices of neighboring buses.} We demonstrate the efficacy of our algorithms via simulation in Section \ref{Sec:Sim} on the IEEE 39 bus system.


We assume that each bus in the system is equipped with a phasor measurement unit. While current power systems are not equipped with this level of observability, the smart grid eventually may, and demonstrations of the operational benefits of observability (such as those presented herein) will serve as incentive to invest in such measurement capabilities in the future. {It seems plausible that our assumption of full observability can be relaxed, and that the approach can be extended to more detailed models of long-term voltage instability, but we defer further discussion of this to our concluding remarks in Section \ref{Sec:Conclusions}}. At the transmission level, centralized, state estimation-based voltage stability margins will continue to play a major role, but it is nonetheless important to assess the advantages and limitations of alternative methods. {Our main message is that complicated sensitivity indices can in fact be computed using only localized information, without the need for centralized coordination or computation.} An area where our algorithms may prove particularly useful is microgrids, where centralized monitoring, control and optimization architectures are often absent and must be implemented collectively by coordinating devices within the microgrid in a scalable way.

\subsection{Preliminaries and Notation}
{\it Sets, vectors and functions:} We let $\real$ (resp. $\real_{>0}$) denote the set of real (resp. strictly positive real) numbers. Given $x \in \real^{n}$, $\|x\|_{\infty} = \max_{i\in\{1,\ldots,n\}} |x_i|$, and $[x] \in \real^{n\times n}$ is the associated diagonal matrix with $x$ on the diagonal.
%
Throughout, $\vones[n]$ and $\vzeros[n]$ are the $n$-dimensional vectors of unit and zero entries, and $\vzeros[]$ is a matrix of all zeros of appropriate dimensions. The $n \times n$ identity matrix is $I_n$.

\section{System Models and Sensitivity-Based Voltage Collapse Proximity Indicators}
\label{Sec:GridModel}

We begin by defining the grid models to be used in the paper before reviewing the relevant voltage collapse indices.

\subsection{Power System Model}

We model a balanced, quasi-synchronous power grid as a connected, undirected and weighted graph $(\mathcal{V},\mathcal{E})$, where $\mathcal{V}$ is the set of nodes (buses) and $\mathcal{E} \subseteq \mathcal{V} \times \mathcal{V}$ is the set of edges (branches).  We partition the set of buses $\mathcal{V}$ as $\mathcal{V} = \mathcal{L} \union \mathcal{G}$, with $n \geq 1$ load (PQ) buses $\mathcal{L} = \{1,\ldots,n\}$ and $m \geq 1$ generator (PV) buses $\mathcal{G} = \{n+1,\ldots,n+m\}$.\footnote{For our purposes, PV generator buses $\mathcal{G}$ may represent synchronous generators, frequency-dependent loads, or grid-forming inverters implementing droop controllers \cite{FD-JWSP-FB:13y,JWSP-FD-FB:13t}. Similarly, load buses $\mathcal{L}$ may represent standard static loads as well as inverters performing maximum power point tracking.}
Each branch $\{i,j\} \in \mathcal{E}$ is weighted by a transfer admittance $y_{ij} = g_{ij}+jb_{ij}$, where $g_{ij} \geq 0$ and $b_{ij} \leq 0$. 
We encode the weights and topology in the bus admittance matrix $Y$, with elements $Y_{ij} = -y_{ij}$ and $Y_{ii} = -\sum_{j=1}^{n+m}y_{ij} + y_{\mathrm{shunt},i}$, where $y_{\mathrm{shunt},i}$ is the shunt element at bus $i$. The conductance matrix $G$ and susceptance matrix $B$ are defined by $G = \mathrm{Re}(Y)$ and $B = \mathrm{Im}(Y)$.
To each bus we associate a phasor voltage $V_i\angle \theta_i$ and a complex power injection $P_i + j Q_i$, which are related by the \emph{power flow equations}
\begin{subequations}
\begin{align}
P_i &= \!\sum_{j\in\mathcal{V}} V_iV_j(G_{ij}\cos(\theta_i-\theta_j)+B_{ij}\sin(\theta_i-\theta_j))\,,
\label{Eq:Active}\\
Q_i &= \!\sum_{j\in\mathcal{V}} V_iV_j(G_{ij}\sin(\theta_i-\theta_j)-B_{ij}\cos(\theta_i-\theta_j))\,.
\label{Eq:Reactive}
\end{align}
\end{subequations}
The unknowns in \eqref{Eq:Active}--\eqref{Eq:Reactive} are the phase angles $\theta = (\theta_1,\ldots,\theta_{n+m})$ and the load voltages $V_L = (V_1,\ldots,V_n)$. {With shunt admittances absorbed into the admittance matrix, we assume that the remainder of the load at each PQ bus can be described by a constant power load model. Extending our approach to more general voltage-dependent static load models $P_i(V_i)$ and $Q_i(V_i)$ is straightforward, and requires only a few additional terms in the formulae and algorithms which follow; we omit these extensions for notational simplicity. Further comments on grid modeling are deferred to Section \ref{Sec:Conclusions}.}



\subsection{Cyber Layer Model}
\label{Sec:CyberModel}

We assume that devices (agents) are embedded at each bus which can measure local information, communicate information with devices at nearby buses, and perform basic computations on the measured data. 

{Regarding measurement, we assume that each bus $i \in \mathcal{V}$ is equipped with a PMU, yielding accurate synchronized measurements of voltage phasors $V_i\angle \theta_i$.}
In addition we assume that the processor at each bus has knowledge of (or access to measurements of) power system infrastructure \emph{incident to the bus}, such as local power consumption/generation ($P_i$, $Q_i$), the admittances of incident electrical lines ($y_{ij}$), and local shunt elements ($y_{\mathrm{shunt},i}$). Admittances may be know a priori, or estimated in an initialization phase using ranging technologies over power line communication (PLC) channels. Similarly, shunt susceptances designed to support voltage magnitudes are often either fixed or switched by local controllers which could be integrated into the processing equipment under consideration. {In contrast to SCADA systems which sample data every few seconds, PMUs under the IEEE Standard C37.118 \cite{KEM-DH-MGA-SA-MB-GB-GB-JB-JYC-BD-VG-BK-DK-AGP-JS-VS-JS-YS-CH-BK-EP:08,KEM:15} are synchronized and able to take 10's of samples per second. On the time-scales of interest for long-term voltage stability, this fast sampling is well approximated as continuous-time measurement. Throughout we assume high-quality measurements and do not distinguish between measured and true values: if PMU measurement quality is determined to be an issue, a distributed state-estimation and filtering layer \cite{VK-GBG:13,SB-RC-MT:14} can be implemented between the raw measurements and our algorithms to improve signal-to-noise ratios and reject bad data.
}


Regarding communication, we assume that each agent can communicate bidirectionally with the agents at adjacent buses to which it is electrically connected. Said differently, the topology of the communication layer mimics the physical grid topology. This communication could be achieved through power line communication (PLC), limited-range wireless, or Ethernet. {We emphasize that here our focus is not on detailed communication protocols, but on highlighting the sufficiency of local information exchange for the exact calculation of standard sensitivity indicators.} To streamline our mathematical developments throughout, we will therefore assume generous communication capabilities which in effect permit continuous-time communication. 
{Due to the large separation of time-scales between PMU sampling rates and LTVI, and the fact that our algorithms require only short-distance communications, throughout we assume that delays are negligible. Nonetheless, in Remark \ref{Rem:DistribSolvers} we comment on theoretical extensions to less restrictive communication assumptions.}

\subsection{Sensitivity-Based Voltage Collapse Proximity Indicators}
\label{Sec:VCPIReview}

As long-term voltage instability and voltage collapse is associated with saddle-node bifurcation of the power flow equations, singularity of the power flow Jacobian or related matrices has long been used as an indicator of voltage collapse \cite{VAV-VAS-VII:75}.
Related approaches include modal analysis, singular value and condition number indicators, 
sensitivity indices, 
continuation methods,
optimization, and energy VCPIs.
%
Surveys, classifications, and comparative studies of various VCPIs are available in \cite{RAS-AGC-JES-HLF:88,JB-PR:94,CAC-ACZDS-VHQ:96,MMEK-SA-MSK:97,TVC-CV:98,MMS-EMS-AMSM:99,AKS-DG:00,CAC:02,MVS-CKB:09,MG-TVC:11,ME-MS:13}.

Here we focus on one of the oldest classes of VCPIs, the sensitivity indices, which are based on the sensitivity of the system operating point to variations in parameters. The idea is that small variations in, for example, load demands, will produce large variations in bus voltages near bifurcation \cite{BMW-BRC:69,SA-AI:76,RAS-AGC-JES-HLF:88}. While many sensitivity-based VCPIs have been superseded in practical power system operations by more accurate, more computationally intensive techniques, they nonetheless provide intuitive actionable information, and are relatively straightforward to {define and interpret}. We recall three basic indices \cite[Sec. 8.2.3]{JM-JWB-JRB:08} and then comment on the information needed to compute them. 

\smallskip

\paragraph{The ``$\d V/\d Q$'' Index} This index measures the sensitivity of load bus voltage magnitudes with respect to changes in reactive power demands. For a multi-bus network, we may formulate the appropriate indicator $\mathbb{I}_i$ as
\begin{equation}\label{Eq:dvdq}
\mathbb{I}_{i} \triangleq \sum_{j \in \mathcal{L}}\nolimits\frac{Q_j}{V_i}\frac{\delta V_i}{\delta Q_j}\,,\qquad i \in \mathcal{L}\,.
\end{equation}
The summands are the point elasticities of the voltage at load bus $i$ with respect to the reactive demand at load bus $j$. The sum then evaluates the \emph{total elasticity} of the voltage at bus $i \in \mathcal{L}$. The index ranges from 0 at open-circuit conditions to $+\infty$ when the system reaches the point of collapse. 

\smallskip

\paragraph{The ``$\d V_L/\d V_G$'' Index} Also called the ``$\d V/\d E$'' index, this index measures the sensitivity of load bus voltages to changes in generator network voltage set points. The appropriate multi-bus index $\mathbb{J}_i$ is
\begin{equation}\label{Eq:dvde}
\mathbb{J}_i \triangleq \sum_{k \in \mathcal{G}} \nolimits \frac{\delta V_i}{\delta V_k}\,,\qquad i \in \mathcal{L}\,.
\end{equation}
Near open-circuit conditions $\mathbb{J}_{i}$ should be near unity, indicating that changes in load voltages track changes in generator voltages with unity gain (a ``controllability'' property \cite{RAS-AGC-JES-HLF:88}). The index tends to $+\infty$ at the point of collapse. 

\smallskip

\paragraph{The $\mathrm{d}Q_G/\mathrm{d}Q_L$ Index} This index measures the incremental reactive power generation required to supply an incremental amount of additional load, and therefore quantifies the (inverse) efficiency of reactive power transport through the network. The appropriate multi-bus definition is
\begin{equation}\label{Eq:dqdq}
\mathbb{K}_{i} \triangleq \sum_{k \in \mathcal{G}}\nolimits \frac{\delta Q_k}{\delta Q_i}\,,\qquad i \in \mathcal{L}\,.
\end{equation}
With our sign conventions, the $\mathbb{K}_{i}$ ranges from $-1$ at open-circuit to $-\infty$ at bifurcation, indicating that the network transports reactive power inefficiently near voltage collapse.

%

\smallskip

The important observations regarding the indices \eqref{Eq:dvdq}--\eqref{Eq:dqdq} are (i) that the matrices of derivatives defining them are generally \emph{dense} matrices, and (ii) that the matrix elements take into account the global state of the network. For example, for a processor at bus $i \in \mathcal{L}$ to directly compute $\mathbb{J}_i$, it would need to not only be directly aware of all generators connected to the network, but also know numerically how the set point $V_k$ of each influences the local voltage $V_i$. This sensitivity is in turn influenced by the presence (or absence) of loading/compensation at all other buses. Ostensibly then, \eqref{Eq:dvdq}--\eqref{Eq:dqdq} incorporate non-local information, and it would appear then that only an operator with centralized or near-centralized state information can evaluate them.


\section{Distributed Computation of Sensitivity-Based VCPIs}
\label{Sec:DistribVCPI}

We now detail our approach for distributing the computation of the VCPIs presented in Section \ref{Sec:VCPIReview}. We present our approach pedagogically for the $\d V/\d Q$ index \eqref{Eq:dvdq} before formally defining our distributed protocols for all indices \eqref{Eq:dvdq}--\eqref{Eq:dqdq} and addressing protocol convergence. To begin, note that in vector notation the $\mathrm{d}V/\mathrm{d}Q$ index \eqref{Eq:dvdq} becomes
$$
\mathbb{I}^{} = [V_L]^{-1}\frac{\delta V_L}{\delta Q_L}Q_L\,,
$$
where $\mathbb{I}^{} = (\mathbb{I}^{}_1,\ldots,\mathbb{I}^{}_n)$, $Q_L = (Q_1,\ldots,Q_n)$, $[V_L]$ is the diagonal matrix of load bus voltages, and $\delta V_L/\delta Q_L$ is the matrix with elements $\delta V_i/\delta Q_j$, $i,j \in \mathcal{L}$. Away from the point of collapse the matrix $\delta V_L/\delta Q_L$ is invertible, and we may equivalently write
\begin{equation}\label{Eq:SystemOfEquations}
\frac{\delta Q_L}{\delta V_L}[V_L]\mathbb{I}^{} = Q_L\,,
\end{equation}
which is a (dense) system of equations. Returning briefly to the power flow \eqref{Eq:Active}--\eqref{Eq:Reactive}, around an operating point $(\theta,V_L) \in \mathbb{R}^{n+m} \times \real^{n}_{>0}$ incremental changes $(\delta \theta, \delta V_L, \delta V_G)$ in phase angles, load voltages, and generator voltages are related to incremental changes $(\delta P, \delta Q_L, \delta Q_G)$ in active power injections and reactive power injections (load and generator) by
%
%
\begin{equation}\label{Eq:LinPowerFlow}
\begin{pmatrix}\delta P \\ \delta Q_L \\ \delta Q_G\end{pmatrix} = 
\left(
\begin{array}{c|c|c}
\frac{\partial P}{\partial \theta} & \frac{\partial P}{\partial V_L} & \frac{\partial P}{\partial V_G} \\ [0.5ex] \hline \\ 
[-2ex] \frac{\partial Q_L}{\partial \theta} & \frac{\partial Q_L}{\partial V_L} & \frac{\partial Q_L}{\partial V_G}
\\ [0.5ex] \hline \\
[-2ex] \frac{\partial Q_G}{\partial \theta} & \frac{\partial Q_G}{\partial V_L} & \frac{\partial Q_G}{\partial V_G}
\end{array}
\right)
\begin{pmatrix}\delta \theta \\ \delta V_L \\ \delta V_G\end{pmatrix}\,,
\end{equation}
where all partial derivatives are evaluated at the operating point. 
If variations in active power injections $\delta P$ and generator voltages $\delta V_G$ are held at zero, the first two block-rows of equations in \eqref{Eq:LinPowerFlow} may be solved to yield

\begin{equation}\label{Eq:TotalDerivative}
\frac{\delta Q_L}{\delta V_L} = \frac{\partial Q_L}{\partial V_L} - \frac{\partial Q_L}{\partial \theta}\left(\frac{\partial P}{\partial \theta}\right)^{\dagger} \frac{\partial P}{\partial V_L}\,.
\end{equation}
where $\dagger$ denotes the Moore-Penrose pseudoinverse (see Remark \ref{Rem:Ass}). The system of equations \eqref{Eq:SystemOfEquations} for $\mathbb{I}$ then becomes
$$
\frac{\partial Q_L}{\partial V_L}[V_L]\mathbb{I} - \frac{\partial Q_L}{\partial \theta}\left(\frac{\partial P}{\partial \theta}\right)^{\dagger} \frac{\partial P}{\partial V_L}[V_L]\mathbb{I} = Q_L\,.
$$
Introducing an auxiliary variable $\mathbb{I}_{\rm aux} \in \real^{n+m}$, this dense system of equations is equivalent to the expanded system
\begin{equation}\label{Eq:Pair}
\left(
\begin{array}{c|c}
\frac{\partial Q_L}{\partial V_L}[V_L] & \frac{\partial Q_L}{\partial \theta} \\ [0.5ex] \hline \\ [-2ex] \frac{\partial P}{\partial V_L}[V_L] & \frac{\partial P}{\partial \theta}
\end{array}\right)
\begin{pmatrix}\mathbb{I} \\ \mathbb{I}_{\rm aux}\end{pmatrix} = \begin{pmatrix}Q_L\\ \vzeros[n+m] \end{pmatrix}\,.
\end{equation} 
The coefficient matrix in \eqref{Eq:Pair} is sparse, its sparsity pattern closely related the physical grid topology. Indeed, the sparsity of such matrices has long been used as an aid for fast computation of stability margins \cite{PAL-TS-GA-DJH:92}. {\tb While sparsity of the unreduced Jacobian \eqref{Eq:LinPowerFlow} could also be exploited for computing the desired indices, \eqref{Eq:LinPowerFlow} will typically contain unnecessary information, which for our algorithms would lead to unnecessary communication and computation. For example, the third block-row in \eqref{Eq:LinPowerFlow} contains unnecessary information for the $\mathrm{d}V/\mathrm{d}Q$ index. We therefore find the reduced Jacobian-like matrix in \eqref{Eq:Pair} more useful to work with.} To propose the simplest, most intuitive distributed algorithm for calculating the stability index $\mathbb{I}$, we make the following assumptions.

\smallskip

\begin{assumption}[\textbf{System Matrix Stability}]\label{Ass:Jac}
All eigenvalues of the matrices
$$
\frac{\partial P}{\partial \theta}\,, \quad  \left(
\begin{array}{c|c}
\frac{\partial P}{\partial \theta} & \frac{\partial P}{\partial V_L}\\ [0.5ex] \hline \\ 
[-2ex] \frac{\partial Q_L}{\partial \theta} & \frac{\partial Q_L}{\partial V_L} \end{array}
\right)\,,\quad \left(
\begin{array}{c|c}
\frac{\partial P}{\partial \theta} & \frac{\partial P}{\partial V_L}[V_L]\\ [0.5ex] \hline \\ 
[-2ex] \frac{\partial Q_L}{\partial \theta} & \frac{\partial Q_L}{\partial V_L}[V_L] \end{array}
\right)\,.
$$
have positive real parts, with the exception of a simple zero eigenvalue for each with respective right eigenvectors $\vones[n+m]$, $(\vones[n+m],\vzeros[n])$ and $(\vones[n+m],\vzeros[n])$.
\end{assumption}

\begin{remark}[\textbf{Comments on Assumption \ref{Ass:Jac}}]\label{Rem:Ass}
The simple zero eigenvalues of the matrices in Assumption \ref{Ass:Jac} correspond to a uniform shift $\delta \theta \mapsto \delta \theta + \alpha \vones[n+m]$ of all phase angle deviations $\delta\theta$. Since phase is defined only up to a reference, this trivial degree of freedom may be removed by restricting $\mathbb{I}_{\rm aux}$ to lie in the subspace orthogonal to $\vones[n+m]$, in which case all three matrices are effectively invertible. In practice Assumption \ref{Ass:Jac} holds away from the point of collapse \cite{CAC-ACZDS-VHQ:96}, and the non-zero eigenvalues of these matrices are often found to be real or have small imaginary parts \cite[Appendix B.3]{CT:94}. Moreover, Assumption \ref{Ass:Jac} is quite natural since (1) the matrices under consideration describe stable small-signal behavior for certain classes of power system dynamics, and (2) these dynamics are known to not exhibit Hopf bifurcations, and hence the respective matrices can only become singular when an eigenvalue reaches the origin during saddle-node bifurcation \cite{CAC:95}. In this sense then, Assumption \ref{Ass:Jac} is ``necessary and sufficient'' for the linear system \eqref{Eq:Pair} to be well-posed. \hfill \oprocend
%
%
\end{remark}

Our key observation is that the matrix elements in \eqref{Eq:Pair} are determined by localized information: the $ij$th element depends only on the voltage phasors at buses $i$ and $j$ and on the admittances of the adjoining branches. It follows that by using phasor measurements and communication among adjacent buses, the solution of \eqref{Eq:Pair} for $(\mathbb{I},\mathbb{I}_{\rm aux})$ can be distributed among processors embedded at each bus. With this goal in mind, to each load bus $i \in \mathcal{L}$ we associate a pair of scalar states $(x_i,y_i) \in \real^2$, while to each generator bus $i \in \mathcal{G}$ we associate a scalar state $y_i \in \real$. We assume that these states can be communicated bidirectionally between directly adjacent buses. Our first formal result gives a simple distributed algorithm in continuous-time such that $\lim_{t\rightarrow \infty} x_i(t) \rightarrow \mathbb{I}_i$ for each $i \in \mathcal{L}$. For notational convenience we define the data coefficients
\begin{subequations}\label{Eq:Data}
\begin{align}
d_{ij} &\triangleq V_iV_j\left(G_{ij}\sin(\theta_i-\theta_j)-B_{ij}\cos(\theta_i-\theta_j)\right)\,,\\
D_{ij} &\triangleq V_iV_j\left(G_{ij}\cos(\theta_i-\theta_j)+B_{ij}\sin(\theta_i-\theta_j)\right)\,,
\end{align}
\end{subequations}
for each $i,j \in \mathcal{V}$, which depend only on known constants, locally measured PMU data, and PMU data communicated between adjacent buses.

\smallskip

\begin{theorem}[\textbf{Distributed $\boldsymbol{\d V/\d Q}$ Index}]\label{Thm:dvdq} Consider the $\d V/\d Q$ indices $\mathbb{I}_i$ defined in \eqref{Eq:dvdq} and let $d_{ij}$ and $D_{ij}$ be as in  \eqref{Eq:Data}. Let each load bus $i \in \mathcal{L}$ execute
\begin{subequations}\label{Eq:dvdq_update_load}
\begin{align}
\label{Eq:dvdq_update_load_x}
\tau\dot{x}_i &= Q_i(1-x_i)-P_iy_i-\sum_{j\in\mathcal{L}} d_{ij}x_j + \sum_{j\in\mathcal{V}} D_{ij}y_j\,,\\
\label{Eq:dvdq_update_load_y}
\tau\dot{y}_i &= Q_iy_i - P_ix_i - \sum_{j\in\mathcal{L}} D_{ij}x_j - \sum_{j\in\mathcal{V}}d_{ij}y_j \,,
\end{align}
\end{subequations}
for some chosen $\tau > 0$, while each generator bus $i \in \mathcal{G}$ executes
\begin{equation}\label{Eq:dvdq_update_gen}
\tau\dot{y}_i = Q_iy_i - \sum_{j\in\mathcal{L}} D_{ij}x_j - \sum_{j\in\mathcal{V}}d_{ij}y_j\,.
\end{equation}
Then for any initial condition $(x(0),y(0)) \in \real^{n}\times\real^{n+m}$ it holds for each $i \in \mathcal{L}$ that $\lim_{t \rightarrow \infty} x_i(t) = \mathbb{I}_i$.
\end{theorem}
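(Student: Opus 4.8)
The plan is to recognize the distributed dynamics \eqref{Eq:dvdq_update_load}--\eqref{Eq:dvdq_update_gen} as a linear time-invariant system whose equilibria are exactly the solutions of the sparse linear system \eqref{Eq:Pair}, and then to read off convergence from the spectral structure granted by Assumption \ref{Ass:Jac}. Stacking the states as $z = (x,y) \in \real^{n}\times\real^{n+m}$, the first step is to verify that \eqref{Eq:dvdq_update_load}--\eqref{Eq:dvdq_update_gen} can be written compactly as $\tau\dot z = -Az + b$, where $b = (Q_L,\vzeros[n+m])$ and $A$ is precisely the coefficient matrix in \eqref{Eq:Pair}. This identification is the one place where genuine calculation is unavoidable: one differentiates the power-flow equations \eqref{Eq:Active}--\eqref{Eq:Reactive} entrywise and checks, using \eqref{Eq:Data} together with $P_i = \sum_{j} D_{ij}$ and $Q_i = \sum_j d_{ij}$, that the coefficients of $x_j$ and $y_j$ multiplying each $\tau\dot x_i$ and $\tau\dot y_i$ reproduce, up to the overall sign, the entries of the four Jacobian blocks in \eqref{Eq:Pair}. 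The diagonal terms $Q_i$ and $P_i$ appearing in the updates are exactly the discrepancy between the scaled Jacobian blocks and the raw neighbour sums $\sum_j d_{ij}(\cdot)$, $\sum_j D_{ij}(\cdot)$, and the fact that a generator bus carries only a $y_i$-equation reflects that a bus in $\mathcal{G}$ contributes only a $\delta P$-row to \eqref{Eq:LinPowerFlow}. I expect this bookkeeping to be the main, if routine, obstacle.

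Once the dynamics are in the form $\tau\dot z = -Az + b$, the rest is structural. A simultaneous block permutation of rows and columns shows that $A$ is similar to the third matrix in Assumption \ref{Ass:Jac}, so $A$ has a simple zero eigenvalue with right eigenvector $r = (\vzeros[n],\vones[n+m])$ and all remaining eigenvalues in the open right half-plane. Consequently $\real^{2n+m} = \Ker(A)\oplus\range(A)$, with $\range(A)$ invariant under $A$ and the restriction of $-A$ to $\range(A)$ Hurwitz. Existence of an equilibrium --- equivalently $b\in\range(A)$ --- is inherited from the reduction preceding the theorem: Assumption \ref{Ass:Jac} legitimizes the pseudoinverse manipulation \eqref{Eq:TotalDerivative} and renders $\frac{\delta Q_L}{\delta V_L}[V_L]$ nonsingular, so \eqref{Eq:SystemOfEquations} has a unique solution $\mathbb{I}$, and completing it with $\mathbb{I}_{\rm aux} = -\left(\frac{\partial P}{\partial\theta}\right)^{\dagger}\frac{\partial P}{\partial V_L}[V_L]\mathbb{I}$ produces a solution of \eqref{Eq:Pair} (here one uses $\frac{\partial P}{\partial\theta}\vones[n+m] = \vzeros[n+m]$ and $\frac{\partial Q_L}{\partial\theta}\vones[n+m] = \vzeros[n]$). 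Conversely, eliminating $\mathbb{I}_{\rm aux}$ from the second block-row of \eqref{Eq:Pair} returns \eqref{Eq:SystemOfEquations}, so the first block of \emph{every} solution of \eqref{Eq:Pair} equals this same vector $\mathbb{I}$.

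The convergence claim then follows by decomposing $z(0) = \alpha r + w_0$ with $\alpha \in \real$ and $w_0 \in \range(A)$: since $Ar = 0$ and $b \in \range(A)$, the $r$-component of $z(t)$ stays equal to $\alpha$ for all $t$, while the $\range(A)$-component satisfies a Hurwitz affine ODE and hence converges exponentially to the unique $w^{\star} \in \range(A)$ with $Aw^{\star} = b$. Thus $z(t) \to \alpha r + w^{\star}$, which solves \eqref{Eq:Pair}; because the first $n$ entries of $r$ vanish and (as just noted) the first block of every solution of \eqref{Eq:Pair} equals $\mathbb{I}$, we conclude $x_i(t) \to \mathbb{I}_i$ for every $i \in \mathcal{L}$ and every initial condition. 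It is worth flagging in the write-up where the marginal eigenvalue is harmless: it lies entirely in the $y$-block --- the phase-reference direction --- so it never enters the $x$-components, which is also why nothing is asserted about $y$ and why no normalization of $\mathbb{I}_{\rm aux}$ is needed. The hypotheses on the first two matrices in Assumption \ref{Ass:Jac} enter only through this existence/uniqueness step; the spectral fact driving the ODE itself is the hypothesis on the third matrix.
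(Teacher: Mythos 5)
Your proposal is correct and follows essentially the same route as the paper: write the updates in vector form as $\tau\dot z = -Az+b$ with $A$ the coefficient matrix of \eqref{Eq:Pair} (via Lemma \ref{Lem:Jacobian}), invoke the permutation relation to the third matrix in Assumption \ref{Ass:Jac}, and observe that the simple zero mode lies entirely in the $y$-block so that $x(t)\to\mathbb{I}$. Your explicit $\Ker(A)\oplus\range(A)$ decomposition and the existence/uniqueness discussion for \eqref{Eq:Pair} merely spell out steps the paper treats as established in the derivation preceding the theorem, so the argument is the same in substance.
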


\begin{proof}\,
Let $x = (x_1,\ldots,x_{n})$ and $y = (y_1,\ldots,y_{n+m})$ be the state vectors associated with  \eqref{Eq:dvdq_update_load}--\eqref{Eq:dvdq_update_gen}. Comparing the right-hand sides of \eqref{Eq:dvdq_update_load}--\eqref{Eq:dvdq_update_gen} with the power flow Jacobian matrix elements in Lemma \ref{Lem:Jacobian}, one finds that in vector notation \eqref{Eq:dvdq_update_load}--\eqref{Eq:dvdq_update_gen} reads as
\begin{equation}\label{Eq:dvdq_dynamics}
\tau \begin{pmatrix}\dot{x} \\ \dot{y}\end{pmatrix} = 
-\left(
\begin{array}{c|c}
\frac{\partial Q_L}{\partial V_L}[V_L] & \frac{\partial Q_L}{\partial \theta} \\ [0.5ex] \hline \\ [-2ex] \frac{\partial P}{\partial V_L}[V_L] & \frac{\partial P}{\partial \theta}
\end{array}\right)
\begin{pmatrix}x \\ y\end{pmatrix} + \begin{pmatrix}Q_L\\ \vzeros[n+m] \end{pmatrix}\,.
\end{equation}
{Conversely, \eqref{Eq:dvdq_update_load}--\eqref{Eq:dvdq_update_gen} are obtained by writing out \eqref{Eq:dvdq_dynamics} in components, using Lemma \ref{Lem:Jacobian} and the definitions of $d_{ij}$ and $D_{ij}$ in \eqref{Eq:Data}.} Comparing the dynamics \eqref{Eq:dvdq_dynamics} to the algebraic equation \eqref{Eq:Pair}, it follows that the steady-states of \eqref{Eq:dvdq_update_load}--\eqref{Eq:dvdq_update_gen} are one-to-one correspondence with the solutions $(\mathbb{I},\mathbb{I}_{\rm aux})$ of \eqref{Eq:Pair}. 
The system matrix in \eqref{Eq:dvdq_dynamics} a permutation of the third matrix in Assumption \ref{Ass:Jac}, and is therefore Hurwitz except for a simple eigenvalue at zero with associated right eigenvector $u_1 = (\vzeros[n],\frac{1}{\sqrt{n+m}}\vones[n+m])$. The component of the state which evolves parallel to $u_1$ only influences the (average value of the) auxiliary variable $y(t)$, and does not influence the index estimates $x(t)$. Since all other eigenvectors are associated with negative eigenvalues, it follows that $\lim_{t\rightarrow +\infty} x(t) = \mathbb{I}$, which completes the proof.
%
%
%
\end{proof}

The monitoring architecture is depicted in Figure \ref{Fig:Schematic} for a simple power system. While the sums in \eqref{Eq:dvdq_update_load}--\eqref{Eq:dvdq_update_gen} run over all loads or all buses, the coefficients $d_{ij}$ and $D_{ij}$ are zero when $\{i,j\}$ is not a physical branch of the network, and hence the only information needed at processor $i \in \mathcal{V}$ is that from adjacent buses. {Said differently, the proposed monitoring architecture requires only peer-to-peer communication, without centralized coordination.} Uniformity of the time-constant $\tau$ across all buses is formally required to infer stability, but as our case study in Section \ref{Sec:Sim} will demonstrate, nonuniform time-constants $\tau_i$ pose no difficulties when implemented. 

{We make five observations regarding the above algorithm. First, note that the storage and computational requirements for implementation are extremely low. Each agent stores only the local states $(x_i(t),y_i(t))$ or $y_i(t)$, and integrates an ordinary differential equation; storing the time-history of states is not required, nor is it required that each agent maintains an estimate of the entire algorithm state. Second, the method relies only on bus measurements, and no measurements of branch currents are required. Third, communication is required only between neighboring buses in the network, minimizing the effects of any communication delays. Fourth, the time variable $t$ in the algorithm should be interpreted as a computational time-scale; the time constant $\tau$ can be adjusted to achieve any desired convergence speed, limited ultimately by communication time scales, measurement sampling time, and system dynamics, but not by the algorithm itself. Fifth and finally, our method does not rely on a linearized power flow model; the linearity of \eqref{Eq:dvdq_update_load}--\eqref{Eq:dvdq_update_gen} comes from examining sensitivities of the nonlinear power flow \eqref{Eq:Active}--\eqref{Eq:Reactive}, with real-time measurements replacing a nonlinear power flow solver.}

\begin{figure}[t]
\begin{center}
\includegraphics[height=0.7\columnwidth]{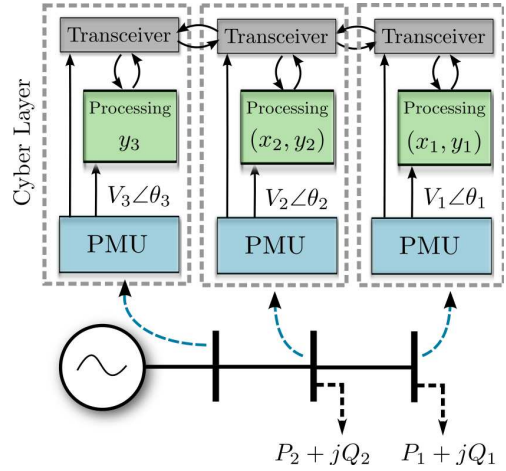}
\caption[]{Depiction of monitoring architecture for a radial three bus power system {implementing the algorithm \eqref{Eq:dvdq_update_load}--\eqref{Eq:dvdq_update_gen}}. Synchrophasor data is measured and fed to local processors, while communication between adjacent processors transfers both synchrophasor measurements $V_i\angle \theta_i$ and filter states $x_i,y_i$.}
\label{Fig:Schematic}
\end{center}
\end{figure}

\begin{remark}[\textbf{Relaxed Algorithms \& Communications}]\label{Rem:DistribSolvers}
As one can see from \eqref{Eq:dvdq_dynamics}, the distributed algorithm \eqref{Eq:dvdq_update_load}--\eqref{Eq:dvdq_update_gen} is of the form $\tau\dot{v} = -Av + b$, and hence can be explicitly discretized for distributed synchronous implementation as $v(k+1) = (I_{2n+m} - hA/\tau)v(k) + hb/\tau$ for a time-step $h > 0$. Under Assumption \ref{Ass:Jac}, this discrete-time system is stable if and only if $h < 2\tau \max_{i}\frac{\mathrm{Re}(\lambda_i)}{|\lambda_i|^2}$. Another option for distributed implementation is Jacobi iteration, where $A$ is decomposed into its diagonal part $T = \mathrm{diag}(a_{ii})$ and off diagonal part $R = A - T$, with the iteration taking the form $v(k+1) = T^{-1}(b-Rv(k))$. This iteration converges if and only if $\rho(T^{-1}R) < 1$; the authors have observed numerically that this assumption often holds for the relevant matrices in Assumption \ref{Ass:Jac}, but in general this assumption and Assumption \ref{Ass:Jac} are not equivalent. The easily verified diagonal dominance conditions for stability of the Jacobi iteration do not hold. 

Packets from neighbors may arrive asynchronously or with delays, and communication may be event-triggered based on sufficient changes in local measurements. While our focus is not on detailed communication protocols, we note a particular approach which is more complex but less restrictive. In \cite{SM-SAM:13} a discrete-time algorithm was developed for the distributed solution of linear equations such as \eqref{Eq:Pair}. In the proposed approach, each bus is assumed to know its respective row of the coefficient matrix, and updates a local estimate of the entire system state $(x,y)$ by exchanging estimates with neighbors. Thus, the requirements on information and communication are similar to the ones required by Theorem \ref{Thm:dvdq}, with slightly more local storage requirements. The approach extends to handle both asynchronism and delays \cite{JL-SM-ASM:13,SM-JL-SAM:15}. {When Assumption \ref{Ass:Jac} fails during extreme system conditions, the algorithm \eqref{Eq:dvdq_update_load}--\eqref{Eq:dvdq_update_gen} will diverge, and one of these alternatives would be required to calculate the relevant indices (which may even change sign under such conditions).}
%
%
%
%
%
%
\hfill \oprocend \end{remark}

{
\begin{remark}[\textbf{Extension to Multi-Area Monitoring}]\label{Rem:AreaExtension}
While we have presented the algorithm \eqref{Eq:dvdq_update_load}--\eqref{Eq:dvdq_update_gen} with a direct peer-to-peer implementation, it is easily extended to the case of multi-area monitoring. To see this, partition the the buses $\mathcal{V}$ of the network into $p \leq n+m$ non-overlapping monitoring areas $\mathcal{V} = \mathcal{A}_1 \cup \cdots \cup \mathcal{A}_p$. Depending on the specific problem setup, these areas could correspond to ISO regions, substations, phasor data concentrators (PDCs), or microgrids. 
Inside area $\mathcal{A}_k$, assume that a central processor $\mathcal{P}_k$ has (i) access to PMU measurements from each bus in area $\mathcal{A}_k$ (ii) knowledge of the grid topology and parameters in area $\mathcal{A}_k$ (iii) knowledge of the power lines which connect $\mathcal{A}_k$ to neighboring areas, and (iv) the ability to perform basic computations and communicate data with the processors in neighboring areas. 
In this case, the algorithm \eqref{Eq:dvdq_dynamics} would simply be block-partitioned according to the different areas, with central processors implementing the required blocks.
The case of one area $p = 1$ would correspond to complete centralized monitoring, where a central processor aggregates all information and performs all computations, while $p = n+m$ is the case described in main paper, where each bus (e.g., substation) constitutes an area, only local measurements are required, and information exchange is peer-to-peer.
Depending on regional data disclosure policies and privacy concerns, one architecture may be preferable over another; these issues are outside the scope of this work. \hfill \oprocend
\end{remark}
}

Similar filters to \eqref{Eq:dvdq_update_load}--\eqref{Eq:dvdq_update_gen} can be designed to calculate the $\d V_L/\d V_G$ index \eqref{Eq:dvde} and the $\d Q_G/\d Q_L$ index \eqref{Eq:dqdq}; the proofs may be found in Appendix \ref{App:1}.

\smallskip

\begin{theorem}[\textbf{Distributed $\boldsymbol{\d V_L/\d V_G}$ Index}]\label{Thm:dvde} Consider the $\d V_L/\d V_G$ indices $\mathbb{J}_i$ defined in \eqref{Eq:dvde} and let $d_{ij}$ and $D_{ij}$ be as in  \eqref{Eq:Data}. Let each load bus $i \in \mathcal{L}$ execute
\begin{subequations}
\begin{align*}
\tau\dot{x}_i &= -\frac{Q_i}{V_i}x_i-P_iy_i-\sum_{j\in\mathcal{L}} \frac{d_{ij}}{V_j}x_j + \sum_{j\in\mathcal{V}} D_{ij}y_j - \sum_{j \in \mathcal{G}}\frac{d_{ij}}{V_j}\,,\\
\tau\dot{y}_i &= Q_iy_i - \frac{P_i}{V_i}x_i - \sum_{j\in\mathcal{L}} \frac{D_{ij}}{V_j}x_j - \sum_{j\in\mathcal{V}}d_{ij}y_j - \sum_{j \in \mathcal{G}}\frac{D_{ij}}{V_j} \,,
\end{align*}
\end{subequations}
for some chosen $\tau > 0$, while each generator bus $i \in \mathcal{G}$ executes
\begin{equation*}
\tau\dot{y}_i = - \frac{P_i}{V_i} + Q_iy_i - \sum_{j\in\mathcal{L}} \frac{D_{ij}}{V_j}x_j - \sum_{j\in\mathcal{V}}d_{ij}y_j  - \sum_{j \in \mathcal{G}}\frac{D_{ij}}{V_j} \,.
\end{equation*}
Then for any initial condition $(x(0),y(0)) \in \real^{n}\times\real^{n+m}$ it holds for each $i \in \mathcal{L}$ that $\lim_{t \rightarrow \infty} x_i(t) = \mathbb{J}_i$.
\end{theorem}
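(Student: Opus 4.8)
The plan is to follow the proof of Theorem~\ref{Thm:dvdq} line for line, replacing the elasticity structure of the $\d V/\d Q$ index by the unscaled sensitivities relevant to $\d V_L/\d V_G$. First I would write \eqref{Eq:dvde} in vector form as $\mathbb{J} = \frac{\delta V_L}{\delta V_G}\vones[m]$, where $\frac{\delta V_L}{\delta V_G}$ is the $n \times m$ matrix of sensitivities $\delta V_i/\delta V_k$. To express this matrix, I would return to the linearized power flow \eqref{Eq:LinPowerFlow}, set $\delta P = \vzeros[]$ and $\delta Q_L = \vzeros[]$, and keep only the first two block-rows; differentiating with respect to $V_G$ and right-multiplying by $\vones[m]$, then introducing the auxiliary vector $\mathbb{J}_{\rm aux} = \frac{\delta \theta}{\delta V_G}\vones[m] \in \real^{n+m}$ and permuting block-rows and block-columns so that $\mathbb{J}$ appears first, gives the sparse linear system
\[
\left(\begin{array}{c|c}\frac{\partial Q_L}{\partial V_L} & \frac{\partial Q_L}{\partial \theta} \\ [0.5ex] \hline \\ [-2ex] \frac{\partial P}{\partial V_L} & \frac{\partial P}{\partial \theta}\end{array}\right)\begin{pmatrix}\mathbb{J} \\ \mathbb{J}_{\rm aux}\end{pmatrix} = -\begin{pmatrix}\frac{\partial Q_L}{\partial V_G}\vones[m] \\ \frac{\partial P}{\partial V_G}\vones[m]\end{pmatrix}\,,
\]
the exact analogue of \eqref{Eq:Pair}, now without the $[V_L]$ factor (the index involves $\delta V_i$ rather than $\delta V_i/V_i$) and with a nonzero forcing in the generator-equation block coming from $\partial P/\partial V_G$.

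Second, I would check that the filter in the statement is precisely the flow $\tau \dot{v} = -Av + b$ for the coefficient matrix $A$ and right-hand side $b$ above: writing $\tau(\dot{x},\dot{y}) = -A(x,y) + b$ in components, using the power-flow Jacobian entries (Lemma~\ref{Lem:Jacobian}) together with the definitions \eqref{Eq:Data} of $d_{ij}$ and $D_{ij}$, reproduces exactly the load-bus and generator-bus updates. The diagonal terms $-\tfrac{Q_i}{V_i}x_i$ and $Q_i y_i$ and the off-diagonal coefficients $\tfrac{d_{ij}}{V_j}$, $D_{ij}$, $d_{ij}$, $\tfrac{D_{ij}}{V_j}$ come from the standard self- and mutual-derivative identities for \eqref{Eq:Active}--\eqref{Eq:Reactive}, while the constant terms $-\sum_{j\in\mathcal{G}}\tfrac{d_{ij}}{V_j}$, $-\tfrac{P_i}{V_i}$, and $-\sum_{j\in\mathcal{G}}\tfrac{D_{ij}}{V_j}$ are the components of $-\bigl(\tfrac{\partial Q_L}{\partial V_G}\vones[m],\,\tfrac{\partial P}{\partial V_G}\vones[m]\bigr)$. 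This identification simultaneously shows that each entry of $A$ and $b$ depends only on the phasors at the two incident buses and the connecting admittance, so the flow is genuinely distributed, and that the equilibria of the flow are in one-to-one correspondence with the solutions $(\mathbb{J},\mathbb{J}_{\rm aux})$ of the linear system.

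Third and finally, I would invoke the stability argument verbatim from Theorem~\ref{Thm:dvdq}: the matrix $A$ is a permutation of the \emph{second} matrix in Assumption~\ref{Ass:Jac}, so under that assumption every eigenvalue of $A$ lies in the open right half-plane except for a simple zero eigenvalue whose right eigenvector, after the permutation, is $(\vzeros[n],\tfrac{1}{\sqrt{n+m}}\vones[n+m])$. Since this marginal direction has no component in the $x$-block, the zero mode only shifts the average of the auxiliary state $y(t)$; decomposing the state into this mode and the complementary $A$-invariant subspace, on which $-A$ is Hurwitz, the $x$-component converges to the $\mathbb{J}$-part of the (unique, on that subspace) solution of the linear system, whence $\lim_{t\to\infty}x_i(t) = \mathbb{J}_i$ for every $i \in \mathcal{L}$ and every initial condition. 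The only delicate part is the component-level matching in the second step --- in particular getting the $\delta V_G$-forcing terms and the diagonal Jacobian identities right --- but this is routine bookkeeping; the derivation of the linear system and the spectral argument carry over directly from Theorem~\ref{Thm:dvdq}.
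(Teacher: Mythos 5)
Your proposal is correct and follows essentially the same route as the paper's proof: derive the sparse expanded linear system with coefficient matrix $\bigl(\begin{smallmatrix}\partial Q_L/\partial V_L & \partial Q_L/\partial\theta\\ \partial P/\partial V_L & \partial P/\partial\theta\end{smallmatrix}\bigr)$ and forcing $-\bigl(\tfrac{\partial Q_L}{\partial V_G}\vones[m],\tfrac{\partial P}{\partial V_G}\vones[m]\bigr)$, identify the filter with the corresponding linear flow via Lemma~\ref{Lem:Jacobian}, and conclude by the spectral argument of Theorem~\ref{Thm:dvdq}, correctly using the second (unscaled) matrix of Assumption~\ref{Ass:Jac}. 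The only cosmetic difference is that the paper first eliminates $\delta\theta$ to get the reduced expression $\mathbb{J}=-\bigl(\tfrac{\delta Q_L}{\delta V_L}\bigr)^{-1}\tfrac{\delta Q_L}{\delta V_G}\vones[m]$ and then verifies the equilibrium by eliminating $y$, whereas you keep the auxiliary variable $\mathbb{J}_{\rm aux}$ explicit throughout.
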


\begin{theorem}[\textbf{Distributed $\boldsymbol{\d Q_G/\d Q_L}$ Index}]\label{Thm:dqdq} Consider the $\d Q_G/\d Q_L$ indices $\mathbb{K}_i$ defined in \eqref{Eq:dqdq} and let $d_{ij}$ and $D_{ij}$ be as in  \eqref{Eq:Data}. Let each load bus $i \in \mathcal{L}$ execute
\begin{subequations}
\begin{align*}
\tau\dot{x}_i &= -\frac{Q_i}{V_i}x_i-\frac{P_i}{V_i}(y_i-z_i)-\sum_{j\in\mathcal{L}} \frac{d_{ji}}{V_i}x_j\\
&\qquad \qquad \qquad - \sum_{j\in\mathcal{V}} \frac{D_{ji}}{V_i}(y_j-z_j) + \sum_{j \in \mathcal{G}}\frac{d_{ji}}{V_i}\,,\\
\tau\dot{y}_i &= Q_iy_i - P_ix_i + \sum_{j\in\mathcal{L}} D_{ji}x_j - \sum_{j\in\mathcal{V}}d_{ji}y_j\,,\\
\tau\dot{z}_i &= Q_iz_i - \sum_{j\in\mathcal{V}}d_{ji}z_j + \sum_{j \in \mathcal{G}}D_{ji}\,,
\end{align*}
\end{subequations}
for some chosen $\tau > 0$, while each generator bus $i \in \mathcal{G}$ executes
\begin{align*}
\tau\dot{y}_i &= Q_iy_i + \sum_{j\in\mathcal{L}} D_{ji}x_j - \sum_{j\in\mathcal{V}}d_{ji}y_j\,,\\
\tau\dot{z}_i &= -P_i + Q_iz_i - \sum_{j\in\mathcal{V}}d_{ji}z_j + \sum_{j \in \mathcal{G}}D_{ji}\,,
\end{align*}
Then for any initial condition $(x(0),y(0),z(0)) \in \real^{n}\times\real^{n+m}\times \real^{n+m}$ it holds for each $i \in \mathcal{L}$ that $\lim_{t \rightarrow \infty} x_i(t) = \mathbb{K}_i$.
\end{theorem}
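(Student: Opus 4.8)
\noindent\emph{Proof proposal.}\,
The plan is to follow the template of the proof of Theorem~\ref{Thm:dvdq}: rewrite $\mathbb{K}$ as the distinguished block of the solution of a \emph{sparse} linear system, recognize the stated dynamics as a linear flow $\tau\dot v = -Av + b$ whose equilibrium set projects onto that solution, and close the argument with Assumption~\ref{Ass:Jac}.

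First I would put $\mathbb{K}$ in matrix form. Stacking~\eqref{Eq:dqdq} gives $\mathbb{K} = \bigl(\delta Q_G/\delta Q_L\bigr)^{\transpose}\vones[m]\in\real^{n}$. Setting $\delta P = \vzeros[]$ and $\delta V_G = \vzeros[]$ in~\eqref{Eq:LinPowerFlow} and eliminating $(\delta\theta,\delta V_L)$ from the first two block-rows expresses $\delta Q_G$ in terms of $\delta Q_L$ through the inverse of the load-flow Jacobian block $J = \bigl(\begin{smallmatrix}\partial P/\partial\theta & \partial P/\partial V_L\\ \partial Q_L/\partial\theta & \partial Q_L/\partial V_L\end{smallmatrix}\bigr)$ and the block $\bigl(\partial Q_G/\partial\theta\;\;\partial Q_G/\partial V_L\bigr)$; because $\mathbb{K}$ contracts the \emph{generator} index with $\vones[m]$, transposing the resulting expression yields a dense system for $\mathbb{K}$ whose coefficient matrix is $J^{\transpose}$ (up to $[V_L]$-scaling of a block, exactly as in~\eqref{Eq:Pair}). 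This transposition is what forces the data coefficients in the statement to appear with swapped subscripts $d_{ji},D_{ji}$ rather than $d_{ij},D_{ij}$.

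Next I would introduce the two auxiliary vectors. The vector $z\in\real^{n+m}$ is meant to solve, on its own, a sparse $J^{\transpose}$-type system reproducing the ``input-side'' generator sensitivity: its steady-state equations in the statement are forced by $\sum_{j\in\mathcal{G}}D_{ji}$ (with the additional $-P_i$ at generator buses coming from the $\partial Q_G/\partial\theta$ entries restricted to $\mathcal{G}$). The vector $y\in\real^{n+m}$ then plays the role $\mathbb{I}_{\rm aux}$ played in~\eqref{Eq:Pair}, carrying the eliminated $\delta\theta$ degree of freedom, and $(x,y)$ jointly satisfy the expanded sparse system with $z$ and the measured data $Q_L,P_i$ as forcing. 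Using the Jacobian-element identities (Lemma~\ref{Lem:Jacobian}) and the definitions~\eqref{Eq:Data}, I would verify that the component-wise equations in the statement are exactly $\tau\dot v = -Av + b$ with $v = (x,y,z)$, where $A$ is block upper-triangular for the partition $\{(x,y)\}\cup\{z\}$ (since $z$ does not see $(x,y)$) and each diagonal block is a permutation and $[V_L]^{-1}$-rescaling of the transpose of one of the three matrices in Assumption~\ref{Ass:Jac}. As in Theorem~\ref{Thm:dvdq}, the equilibria of this flow are then in one-to-one correspondence with the solutions of $Av = b$, and the $x$-block of every equilibrium equals $\mathbb{K}$.

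Finally, convergence. Transposition preserves the spectrum, so each diagonal block of $A$ is Hurwitz except for a simple zero eigenvalue; by block-triangularity $\mathrm{spec}(A)$ is the union of the diagonal-block spectra, hence $A$ has at most two zero eigenvalues, one from the $z$-block and one from the $(x,y)$-block, each tied (through the left/right eigenvectors of the matrices in Assumption~\ref{Ass:Jac}) to the uniform phase-shift direction $\vones[n+m]$, which lives entirely in the auxiliary coordinates $y$ and $z$. Exactly as in the proof of Theorem~\ref{Thm:dvdq}, the component of $v(t)$ along these zero directions does not feed into $x(t)$, so $\lim_{t\to\infty}x_i(t) = \mathbb{K}_i$ for each $i\in\mathcal{L}$ and every initial condition; passing to components gives the claim. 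The step I expect to be the main obstacle is the second one: correctly identifying the pair of auxiliary vectors $(y,z)$ and their mutual coupling so that the stated equations match $\tau\dot v = -Av+b$ verbatim, and checking (as in Remark~\ref{Rem:Ass}) that the forcing $b$ lies in the range of the triangular blocks of $A$; once the bookkeeping of the two coupled auxiliary systems is in place, the stability half is a transpose-and-block-triangular reprise of Theorem~\ref{Thm:dvdq}.
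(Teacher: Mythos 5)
Your proposal follows essentially the same route as the paper's own proof: vectorize $\mathbb{K}$ from the linearized power flow \eqref{Eq:LinPowerFlow} with $\delta P=\vzeros[]$ and $\delta V_G=\vzeros[]$, recognize the stated updates as $\tau\dot v=-Av+b$ where $A$ is block upper-triangular (the $z$-dynamics decoupled from $(x,y)$, with $y$ playing the role of $\mathbb{I}_{\rm aux}$) and built from transposed Jacobian blocks via Lemma \ref{Lem:Jacobian} (hence the swapped subscripts $d_{ji},D_{ji}$), check that the $x$-component of any equilibrium equals $\mathbb{K}$, and conclude convergence from Assumption \ref{Ass:Jac} exactly as in Theorem \ref{Thm:dvdq}. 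One small correction: no $[V_L]^{-1}$-rescaling is involved --- the $(x,y)$ block is precisely a permuted transpose of the second (unscaled) matrix in Assumption \ref{Ass:Jac} and the $z$ block is $\bigl(\partial P/\partial\theta\bigr)^{\transpose}$, which is what allows the ``transposition preserves the spectrum'' step to apply verbatim (a genuine diagonal rescaling would not be a similarity).
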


\subsection{Incorporating Generator VAR Limits}
\label{Sec:OXL}

The distributed algorithms presented in Theorems \ref{Thm:dvdq}--\ref{Thm:dqdq} ignore an important factor in LTVI studies, namely the reactive power limitations of generators \cite{AEE-YHG:96,FD-BHC-MLC-LA:05,PAR-PWS:06}. When a synchronous generator exceeds these reactive power limits (derived from field and armature current limits) over medium time-scales, the AVR system becomes unable to regulate the network-side generator voltage and over-excitation limiters fix the reactive power output at its limit. On the long-time scales of interest for us, we can therefore approximate this behavior by replacing the PV bus model with a PQ bus model when the generator is at or above its reactive limit \cite{CT:94}. 

%
%

The approach for incorporating these limits into the distributed algorithm \eqref{Eq:dvdq_update_load}--\eqref{Eq:dvdq_update_gen} of Theorem \ref{Thm:dvdq} is as follows (similar approaches hold for the remaining two algorithms). If the reactive power supplied by generator $i \in \mathcal{G}$ satisfies $Q_i < Q_i^{\rm max}$, then the associated processor executes \eqref{Eq:dvdq_update_gen}, just as before. When $Q_i \geq Q_i^{\rm max}$, the processor initializes an additional internal state $x_i(t)$ and instead executes \eqref{Eq:dvdq_update_load_x}--\eqref{Eq:dvdq_update_load_y}. If necessary, this can be accompanied with a binary alert message to its neighbors signaling that a switch has taken place. To avoid chattering due to oscillating reactive power injections during transients, a temporal hysteresis can be used which ensures that $Q_i$ remains above or below $Q_i^{\rm max}$ for a sufficient amount of time before a switch in algorithm is made.

\subsection{Monitoring Thresholds and Worst-Case Indices}
\label{Sec:Consensus}

The algorithms in Theorems \ref{Thm:dvdq}--\ref{Thm:dqdq} give the processor at load bus $i \in \mathcal{L}$ a converging estimate of its stability index $\mathbb{I}_i, \mathbb{J}_i$ or $\mathbb{K}_i$, as well converging estimates of the same indices for any adjacent buses which are also load buses. Based on this information, we highlight two additional steps for monitoring that may be desirable. We discuss the $\d V/\d Q$ algorithm \eqref{Eq:dvdq_update_load}--\eqref{Eq:dvdq_update_gen}; similar statements apply to the other algorithms.

\paragraph*{Monitoring Thresholds} Suppose that to each load bus $i \in \mathcal{L}$ we associate a threshold $\gamma_i > 0$ for the index $\mathbb{I}_i$. These thresholds may be determined by experience, offline trials, or determined online by yet another distributed algorithm. If during monitoring $x_i(t)$ increases above $\gamma_i$ and remains there, an alert is triggered and communicated to neighboring processors. This in turn could trigger localized control responses, or the alert could be propagated system-wide.

\paragraph*{Global Knowledge of Worst-Case Index} Voltage stability of the network is ultimately limited by the weakest or most sensitive bus, quantified in our setup by the largest nodal value $\|\mathbb{I}\|_{\infty} = \max_{i \in \mathcal{L}}\mathbb{I}_i$ of the stability index. It may therefore be desirable for all buses to maintain an estimate $w_i(t)$ of the worst-case index $\|\mathbb{I}\|_{\infty}$ and continuously update it. A simple distributed protocol for achieving this is called \emph{max-consensus} \cite{FI-PC-JJ:12,SG-DDP-AP-AR:13} where each processor executes (in discrete-time)
\begin{equation}\label{Eq:Max_Consensus}
w_i(k+1) = \max\left\{w_i(k),\max_{j, \{i,j\}\in\mathcal{E}} w_j(k)\right\}\,,
\end{equation}
{with the initialization $w_i(t_0) = x_i(t_0)$ for $i \in \mathcal{L}$, where $t_0$ is the time at which execution begins. As generator buses $i \in \mathcal{G}$ do not carry a local state $x_i$, each $w_i(t_0)$ is initialized to a common value $w^*$ for each $i \in \mathcal{G}$, equaling the open-circuit value of the voltage stsability index under consideration. For example, for the $\mathrm{d}V/\mathrm{d}Q$ index $w^* = 0$, while for the $\mathrm{d}V_L/\mathrm{d}V_G$ index $w^* = 1$.} Each processor observes its own index and the indices of its neighbors and updates its estimate with the largest value it sees. By re-initializing and re-executing this periodically, all processors in the network can be made aware of the largest sensitivity.

\section{Case Study: IEEE 39 Bus System}
\label{Sec:Sim}

We demonstrate our approach by implementing our algorithm for the $\d V_L/\d V_G$ index of Theorem \ref{Thm:dvde} on a dynamic model of the reduced New England power grid, containing 9 generators and 30 load buses. A six-state two-axis model is used for the generators consisting of two-state mechanical dynamics, two-state electrical dynamics, {a single-state excitation system and a single-state governor with droop \cite{PWS-MAP:90}; generator and network parameters are drawn from \cite{MAP:89,PWS-MAP:90,RDZ-CEM-DG:11}.}

In place of a uniform filter time constant $\tau$, we let each processor implement its $\d V_L/\d V_G$ filter with a time constant $\tau_i$, which we draw from a uniform distribution between 10s and 20s. {Synchrophasor measurements are assumed to be corrupted with uncorrelated zero mean Gaussian noise, with standard deviation 0.001 p.u. on voltage magnitudes (arising from quantization and harmonic distortion), and $0.01^{\circ}$ on phase angles (due to sampling time discrepancies and inexact synchronization). At a $2\sigma$ level, these values are in compliance with the maximum total phasor error of 1\% specified by IEEE Standard C37.118-2011 \cite{KEM:15}.} Beginning from the base load case \cite{RDZ-CEM-DG:11}, power demands are ramped {along the base case} by 15\% between $t=20$s and $t=40$s, with the newly ramped load being shed abruptly at $t=200$s. 

{The filter states $x_i(t)$ are plotted in Figure \ref{Fig:Sim1} for load buses 3, 12, and 20, and the generator bus 34 whose state is initialized as $x_{34}(0) = 1$. The exact steady-state values of the respective stability indices are plotted in dashed black for each bus, as computed by a central processor solving the linear equation \eqref{Eq:dvde_vector} at each moment in time. First, we observe that the algorithm is able to accurately track the ramp in load between 20s and 40s. The estimates for buses 3, 12, and 20 have effectively converged to their proper values shortly before $t=50$s, but the increase in load has caused the generator at bus 34 to hit its reactive power limits. The respective estimator $x_{34}(t)$ comes online at roughly $t=54$s and converges rapidly, contributing to a further increase in the index estimates $x_i(t)$ of all other buses, and in particular at bus 20 which is directly adjacent to bus 34. At $t=200$s the excess load is shed and filter estimates converge back to their original values; the centralized computation displays significant ringing due to transient dynamics, while the filter state converges relatively smoothly due to its natural first-order dynamics, which act as a low-pass filter. The generator falls back below its reactive power limits, and after an anti-chattering delay the estimator for bus 34 is reset.}

\begin{figure}[t]
\begin{center}
\includegraphics[trim=20mm 0mm 0mm 0mm,clip=true,height=0.75\columnwidth]{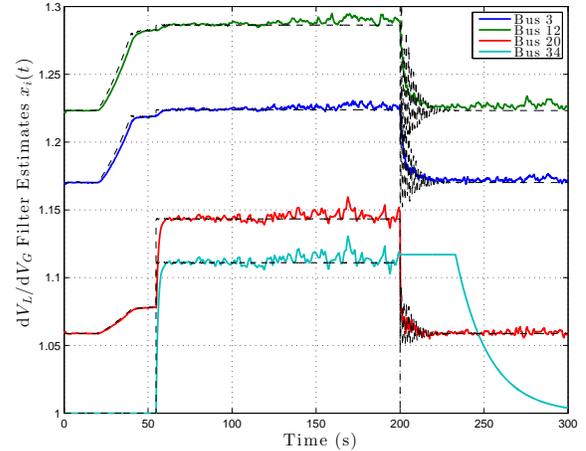}
\caption[]{Estimates $x_i(t)$ of the $\d V_L/\d V_G$ indices for several buses. Black dashed lines denote the exact index values for the respective buses, as calculated in a centralized manner. Noise is omitted on the centralized calculation for clarity.}
\label{Fig:Sim1}
\end{center}
\end{figure}

\begin{figure}[t]
\begin{center}
\includegraphics[trim=22mm 0mm 0mm 0mm,clip=true,height=0.7\columnwidth]{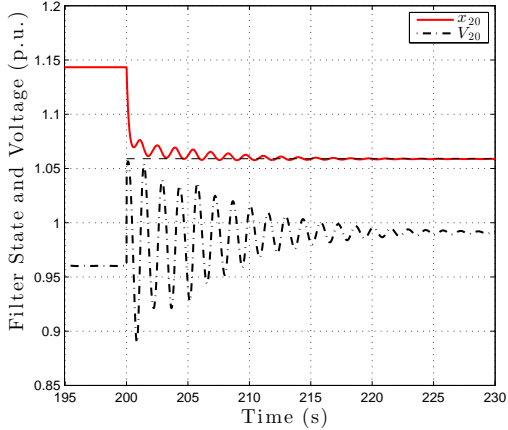}
\caption[]{Estimate $x_{20}(t)$ of the $\d V_L/\d V_G$ index along with the bus voltage at bus 20 for $t \in [195,230]$. The black dashed line denotes the exact index value. Noise is omitted for clarity.}
\label{Fig:Sim1_Close}
\end{center}
\end{figure}

{Figure \ref{Fig:Sim1_Close} displays a close-up of the trace of $x_{i}(t)$ at bus 20 after the load is shed at $t = 200$s, along with the resulting dynamics of the corresponding bus voltage $V_{20}(t)$. As our algorithms use real-time measurements for computing the sensitivity indices, transients experienced by the physical bus variables also impact the filter estimates until convergence occurs. As can be seen from Figure \ref{Fig:Sim1_Close} however, transients in physical variables tend to be damped significantly by the filter.}

{Figure \ref{Fig:Sim2} shows the output for all buses of the max-consensus iteration  \eqref{Eq:Max_Consensus}. Every 30 seconds, generator states were initialized at $w_i(t_0) = 1$, while load bus states were initialized at $x_i(t_0)$. Iterations were performed once a second, and within four to five iterations each bus converges to the largest bus sensitivity. Comparing Figure \ref{Fig:Sim2} to Figure \ref{Fig:Sim1}, this largest sensitivity can be seen to belong to bus 12, with the trend in Figure \ref{Fig:Sim2} accurately tracking the green trace of Figure \ref{Fig:Sim1}. Each bus therefore quickly obtains knowledge of the worst-case global sensitivity.}

\begin{figure}[t]
\begin{center}
\includegraphics[trim=20mm 0mm 0mm 0mm,clip=true,height=0.75\columnwidth]{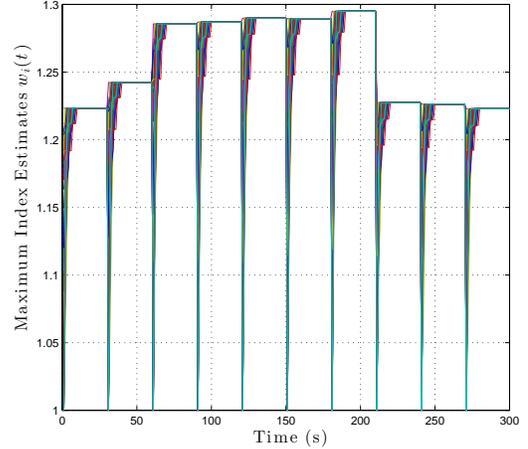}
\caption[]{Estimates $w_i(t)$ of $\max_{i \in \mathcal{V}} x_i(t)$ generated from the max-consensus algorithm \eqref{Eq:Max_Consensus}. The algorithm is reinitialized with the current value of $x_i(t)$ and re-executed every 30 seconds. Generator states are initialized from unity.}
\label{Fig:Sim2}
\end{center}
\end{figure}






\section{Conclusions \& Future Work}
\label{Sec:Conclusions}

{We have presented algorithms which distribute the computation of sensitivity-based voltage stability indices among processors embedded within a smart grid. Using PMU measurements and communication with adjacent buses, each bus is able to iteratively calculate its exact voltage stability sensitivity index. This exact computation is achieved without the requirement of a central decision maker, and we have illustrated the feasibility of the approach via simulation study.

While in this work we have used a purely power flow-based model to calculate sensitivities, an important extension is to augment the power flow equations \eqref{Eq:Active}--\eqref{Eq:Reactive} with additional equilibrium equations \cite[Chapter 6]{TVC-CV:98} corresponding to {\tb generator, controller, limiter, and load  dynamics. Moreover, steady-state reactive power limits are typically functions of active power injections.} As these additional equilibrium equations are typically not coupled between buses, designing analogous distributed algorithms should be feasible {\tb as inter-bus coupling occurs only through the power flow equations}.}
While we have included a basic load-flow type model of generator reactive power limits, more detailed models could be incorporated which include limits based directly on field and armature currents. Future work will also concern relaxing the assumption of full grid observability to partial observability by combining the algorithms presented herein with distributed estimation. Another key direction is the fusion of monitoring protocols such as the one presented herein with automatic distributed control and protection. It seems plausible that optimal control directions in parameter space \cite{ID-LL:92b} can be computed and executed in a distributed way, creating a self-healing grid.


\appendices

\section{Technical Lemmas and Proofs}
\label{App:1}

\begin{lemma}[\textbf{Power Flow Jacobian}]\label{Lem:Jacobian}
Let $(\theta,V_L) \in \real^{n+m}\times\real^n_{>0}$ be a solution of the power flow equations \eqref{Eq:Active}--\eqref{Eq:Reactive}, and let $d_{ij}$ and $D_{ij}$ be as in \eqref{Eq:Data}. When evaluated at at $(\theta,V_L)$ the partial derivatives of \eqref{Eq:Active}--\eqref{Eq:Reactive} are given (for $j \neq i$) by \cite[Sec. 3.5.1.1]{JM-JWB-JRB:08}
\begin{align*}
\frac{\partial P_i}{\partial \theta_i} &= -Q_i + d_{ii}\,,\quad \frac{\partial P_i}{\partial \theta_j} = d_{ij}\,,\\
%
%
V_i\frac{\partial P_i}{\partial V_i} &= P_i + D_{ii}\,,\quad V_j\frac{\partial P_i}{\partial V_j} = D_{ij}\,,\\
V_i\frac{\partial Q_i}{\partial V_i} &= Q_i + d_{ii}\,,\quad V_j\frac{\partial Q_i}{\partial V_j} = d_{ij}\,,\\
\frac{\partial Q_i}{\partial \theta_i} &= P_i - D_{ii}\,,\quad \frac{\partial Q_i}{\partial \theta_j} = -D_{ij}\,.
\end{align*}
\end{lemma}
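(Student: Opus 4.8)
The plan is to establish all eight identities by direct differentiation of the power flow equations \eqref{Eq:Active}--\eqref{Eq:Reactive}, organizing the bookkeeping around two structural observations. First, a term-by-term comparison of \eqref{Eq:Active}--\eqref{Eq:Reactive} with the definitions \eqref{Eq:Data} shows that the summands in the active and reactive power flow are precisely $D_{ij}$ and $d_{ij}$; since $(\theta,V_L)$ is assumed to solve the power flow, this yields the identities $P_i = \sum_{j\in\mathcal{V}} D_{ij}$ and $Q_i = \sum_{j\in\mathcal{V}} d_{ij}$, which I will use repeatedly to fold sums of off-diagonal terms back into $P_i$ and $Q_i$. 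Second, the diagonal summand $j=i$ is special: it equals $D_{ii} = V_i^2 G_{ii}$ in \eqref{Eq:Active} and $d_{ii} = -V_i^2 B_{ii}$ in \eqref{Eq:Reactive}, hence it is independent of all phase angles and is quadratic (not linear) in $V_i$. Separating this term from the off-diagonal sum is the only point requiring care.

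For the phase-angle derivatives I would argue as follows. Differentiating \eqref{Eq:Active} with respect to $\theta_j$ with $j\neq i$ leaves only the $j$-th summand; applying the chain rule to $\cos(\theta_i-\theta_j)$ and $\sin(\theta_i-\theta_j)$ produces exactly $V_iV_j(G_{ij}\sin(\theta_i-\theta_j)-B_{ij}\cos(\theta_i-\theta_j)) = d_{ij}$, and the same computation on \eqref{Eq:Reactive} gives $\partial Q_i/\partial\theta_j = -D_{ij}$. For the $\theta_i$-derivatives, the diagonal term drops out by angle-independence, and each off-diagonal summand contributes the negative of its $\theta_j$-derivative; summing and using the identities above gives $\partial P_i/\partial\theta_i = -\sum_{j\neq i} d_{ij} = -(Q_i-d_{ii})$ and $\partial Q_i/\partial\theta_i = \sum_{j\neq i} D_{ij} = P_i - D_{ii}$.

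For the voltage-magnitude derivatives the structure is identical, with the single difference that the diagonal summand, being proportional to $V_i^2$, contributes a factor of two. Differentiating \eqref{Eq:Active} with respect to $V_j$, $j\neq i$, and multiplying by $V_j$ recovers $V_iV_j(G_{ij}\cos(\theta_i-\theta_j)+B_{ij}\sin(\theta_i-\theta_j)) = D_{ij}$, and likewise $V_j\,\partial Q_i/\partial V_j = d_{ij}$. For $V_i$, the diagonal summand contributes $2 D_{ii}$ after multiplying by $V_i$, while the off-diagonal summands contribute $\sum_{j\neq i} D_{ij} = P_i - D_{ii}$; adding these gives $V_i\,\partial P_i/\partial V_i = P_i + D_{ii}$, and the analogous calculation on \eqref{Eq:Reactive} gives $V_i\,\partial Q_i/\partial V_i = Q_i + d_{ii}$. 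No step goes beyond elementary calculus, so there is no genuine obstacle; the only thing that must be handled correctly is the consistent isolation of the $j=i$ self-term — and, in the voltage derivatives, its factor of two — after which the power flow identities $P_i=\sum_{j} D_{ij}$ and $Q_i=\sum_{j} d_{ij}$ close the argument. One could also simply cite \cite[Sec.~3.5.1.1]{JM-JWB-JRB:08}, but carrying out the differentiation makes the appearance of the compact coefficients $d_{ij}$ and $D_{ij}$ transparent.
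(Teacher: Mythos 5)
Your proposal is correct: all eight identities follow exactly as you describe, and your two bookkeeping devices---the identities $P_i = \sum_{j\in\mathcal{V}} D_{ij}$, $Q_i = \sum_{j\in\mathcal{V}} d_{ij}$ at a power flow solution, and the careful isolation of the angle-independent, quadratic-in-$V_i$ self-term (with its factor of two in the voltage derivatives)---are precisely what the standard derivation requires. The paper itself offers no proof beyond the citation to \cite[Sec.~3.5.1.1]{JM-JWB-JRB:08}, so your direct differentiation simply supplies, correctly, the routine computation that the paper delegates to that reference.
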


\begin{pfof}{Theorem \ref{Thm:dvde}}
To begin, note that by setting $\delta P = \vzeros[n+m]$ and $\delta Q_L = \vzeros[n]$ in \eqref{Eq:LinPowerFlow} and eliminating $\delta \theta$ from the first two blocks of equations, one obtains
$$
\vzeros[n] = \frac{\delta Q_L}{\delta V_L}\delta V_L + \frac{\delta Q_L}{\delta V_G}\delta V_G\,,
$$
where $\delta Q_L/\delta V_L$ is as in \eqref{Eq:TotalDerivative} and
$$
\frac{\delta Q_L}{\delta V_G} = \frac{\partial Q_L}{\partial V_G} - \frac{\partial Q_L}{\partial \theta}\left(\frac{\partial P}{\partial \theta}\right)^{\dagger}\frac{\partial P}{\partial V_G}\,.
$$
It follows then from the definition \eqref{Eq:dvde} of the index $\mathbb{J}_i$ that
\begin{equation}\label{Eq:dvde_vector}
\mathbb{J} = -\left(\frac{\delta Q_L}{\delta V_L}\right)^{-1}\frac{\delta Q_L}{\delta V_G}\vones[m]\,,
\end{equation}
where $\mathbb{J} = (\mathbb{J}_1,\ldots,\mathbb{J}_n)$. Using Lemma \ref{Lem:Jacobian} one may deduce that the distributed algorithm in Theorem \ref{Thm:dvde} may be written in vector form as
\begin{equation}\label{Eq:dvde_dynamics}
\tau \begin{pmatrix}\dot{x} \\ \dot{y}\end{pmatrix} = 
-\left(
\begin{array}{c|c}
\frac{\partial Q_L}{\partial V_L} & \frac{\partial Q_L}{\partial \theta} \\ [0.5ex] \hline \\ [-2ex] \frac{\partial P}{\partial V_L} & \frac{\partial P}{\partial \theta}
\end{array}\right)
\begin{pmatrix}x \\ y\end{pmatrix} - 
\left(
\begin{array}{c}
\frac{\partial Q_L}{\partial V_G}\vones[m] \\ [0.5ex] \hline \\ [-2ex] \frac{\partial P}{\partial V_G}\vones[m] \end{array}\right)\,.
\end{equation}
Setting the left-hand side of \eqref{Eq:dvde_dynamics} to zero and eliminating the auxiliary state $y$, one finds that the unique $x$-component of any equilibrium is given uniquely by $x = \mathbb{J}$. Convergence of $x(t)$ to $\mathbb{J}$ follows from arguments similar to those in the proof of Theorem \ref{Thm:dvdq}.
\end{pfof}

\begin{pfof}{Theorem \ref{Thm:dqdq}}
To begin, set $\delta P = \vzeros[n+m]$ and $\delta V_G = \vzeros[m]$ in \eqref{Eq:LinPowerFlow} and eliminate $\delta \theta$ from the second and third blocks of equations to obtain
\begin{align*}
\delta Q_L &= \frac{\delta Q_L}{\delta V_L} \delta V_L\,,\quad \delta Q_G = \frac{\delta Q_G}{\delta V_L} \delta V_L\,,
\end{align*}
where $\delta Q_L/\delta V_L$ is as in \eqref{Eq:TotalDerivative} and
$$
\frac{\delta Q_G}{\delta V_L} = \frac{\partial Q_G}{\partial V_L} - \frac{\partial Q_G}{\partial \theta}\left(\frac{\partial P}{\partial \theta}\right)^{-1}\frac{\partial P}{\partial V_L}\,.
$$
Eliminating $\delta V_L$ from this pair, we find that
$$
\frac{\delta Q_G}{\delta Q_L} = \frac{\delta Q_G}{\delta V_L}\left(\frac{\delta Q_L}{\delta V_L}\right)^{-1}\,.
$$
Comparing to the definition of the index $\mathbb{K}_i$ in \eqref{Eq:dqdq}, we find that in vector form
$$
\mathbb{K} = \left(\frac{\delta Q_G}{\delta Q_L} \right)^{\sf T}\vones[m] = \left(\frac{\delta Q_L}{\delta V_L}\right)^{\sf -T}\left(\frac{\delta Q_G}{\delta V_L}\right)^{\sf T} \vones[m]\,,
$$
where $\mathbb{K} = (\mathbb{K}_1,\ldots,\mathbb{K}_n)$. Using Lemma \ref{Lem:Jacobian} one may deduce that the distributed algorithm in Theorem \ref{Thm:dvde} may be written in vector form as
\begin{equation}
\begin{aligned}\label{Eq:dqdq_vector}
\tau\begin{pmatrix}\dot{x} \\ \dot{y} \\ \dot{z}\end{pmatrix} &= 
-\left(
\begin{array}{c|c|c}
\left(\frac{\partial Q_L}{\partial V_L}\right)^{\sf T} & \left(\frac{\partial P}{\partial V_L}\right)^{\sf T} & -\left(\frac{\partial P}{\partial V_L}\right)^{\sf T}
\\ [0.5ex] \hline \\ 
[-2ex] \left(\frac{\partial Q_L}{\partial \theta}\right)^{\sf T} & \left(\frac{\partial P}{\partial \theta}\right)^{\sf T} & \vzeros[]
\\ [0.5ex] \hline \\
[-2ex] \vzeros[] & \vzeros[] & \left(\frac{\partial P}{\partial \theta}\right)^{\sf T}
\end{array}
\right)
\begin{pmatrix}x \\ y \\ z\end{pmatrix}\\
\,\\
&\quad + \begin{pmatrix}\vones[m]^{\sf T}\frac{\partial Q_G}{\partial V_L} & \vzeros[n+m]^{\sf T} & -\vones[m]^{\sf T}\frac{\partial Q_G}{\partial\theta}\end{pmatrix}^{\sf T}\,.
\end{aligned}
\end{equation}
By setting the left-hand side of \eqref{Eq:dqdq_vector} to zero and eliminating $y$ and $z$, one may verify that the $x$-component of any equilibrium is given uniquely by $x = \mathbb{K}$.
Convergence of $x(t)$ to the index $\mathbb{K}$ follows from arguments similar to those in the proof of Theorem \ref{Thm:dvdq}.
\end{pfof}

%

%
\IEEEpeerreviewmaketitle


\ifCLASSOPTIONcaptionsoff
  \newpage
\fi


\bibliographystyle{IEEEtran}
\bibliography{alias,Main,FB}

\begin{IEEEbiography}[{\includegraphics[width=1in,height=1.25in,clip,keepaspectratio]{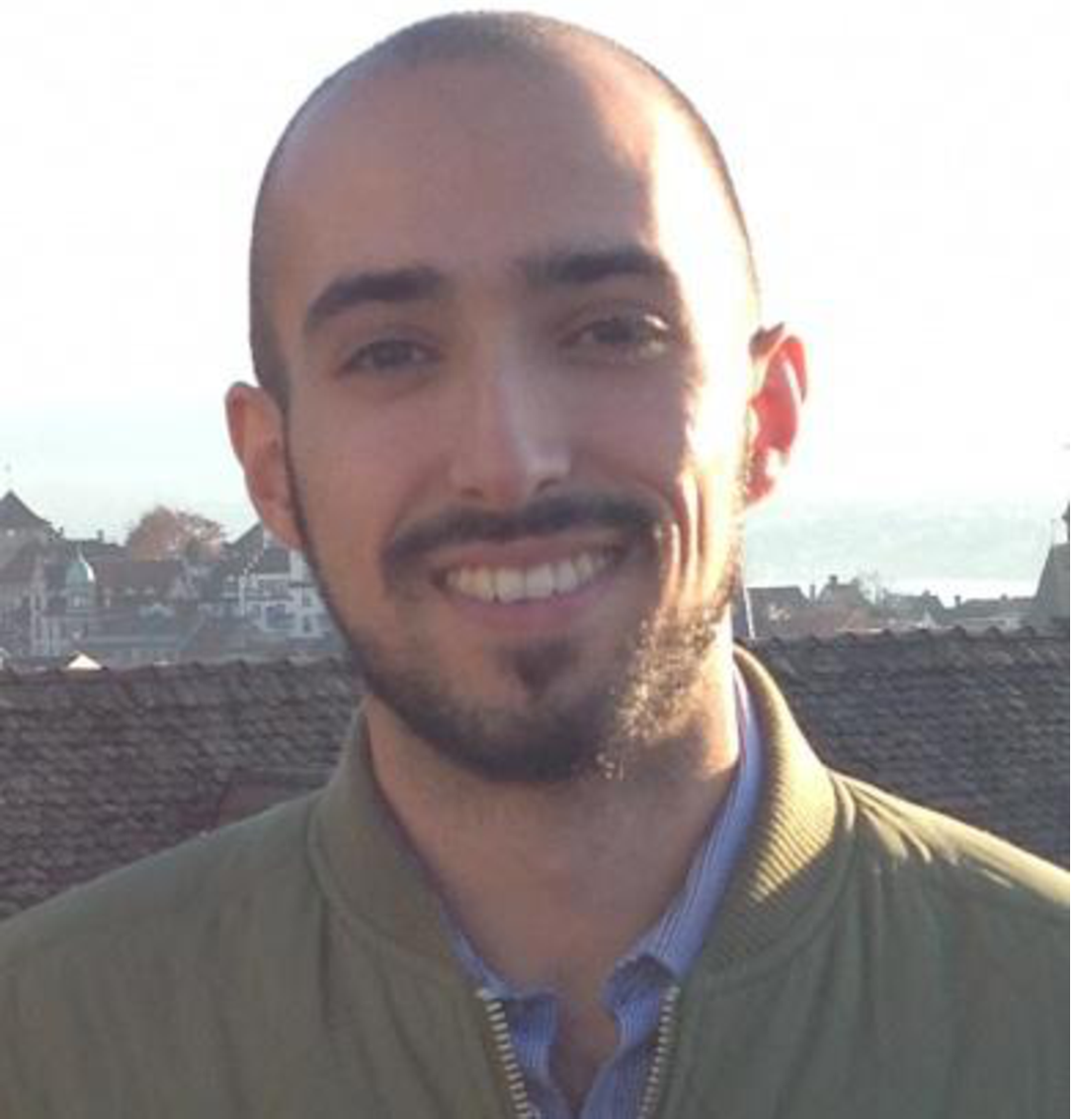}}]{John W. Simpson-Porco} (S'11--M'16) received the B.Sc. degree in engineering physics from QueenÕs University, Kingston, ON, Canada in 2010, and the Ph.D. degree in mechanical engineering from the University of California at Santa Barbara, Santa Barbara, CA, USA in 2015.

He is currently an Assistant Professor of Electrical and Computer Engineering at the University of Waterloo, Waterloo, ON, Canada. He was previously a visiting scientist with the Automatic Control Laboratory at ETH Z\"{u}rich, Z\"{u}rich, Switzerland. His research focuses on the control and optimization of multi-agent systems and networks, with applications in modernized power grids.

Prof. Simpson-Porco is a recipient of the 2012--2014 IFAC Automatica Prize and the Center for Control, Dynamical Systems and Computation Outstanding Scholar Fellowship.
\end{IEEEbiography}

\begin{IEEEbiography}[{\includegraphics[width=1in,height=1.25in,clip,keepaspectratio]{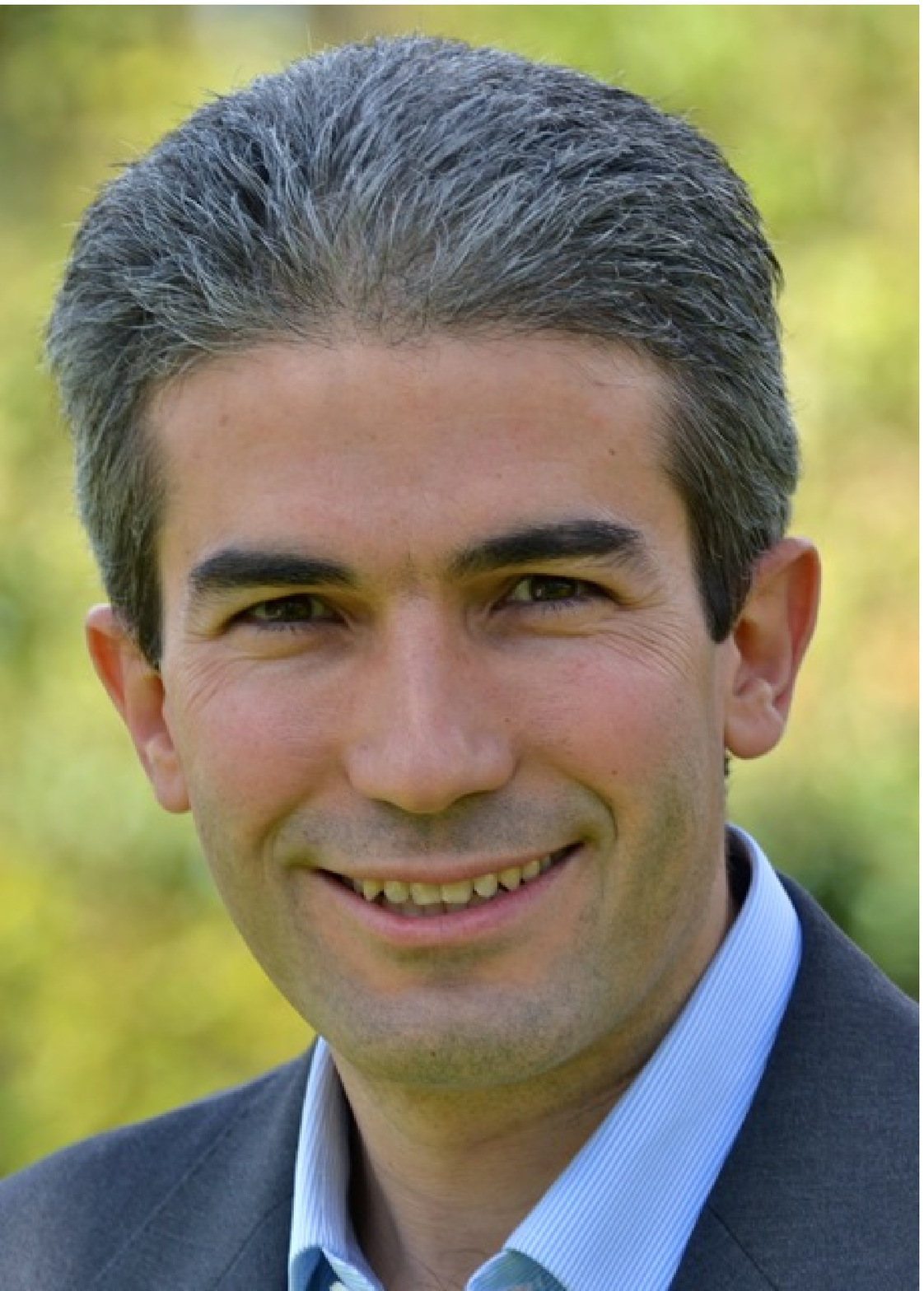}}]{Francesco Bullo} (S'95--M'99--SM'03--F'10) is a Professor with the Mechanical Engineering Department and the Center for Control, Dynamical Systems and Computation at the University of California, Santa Barbara. His main research interests are network systems and distributed control
with application to robotic coordination, power grids and social networks. He is the coauthor of "Geometric Control of Mechanical Systems" (Springer, 2004, 0-387-22195-6) and "Distributed Control of Robotic Networks" (Princeton, 2009, 978-0-691-14195-4).  He received the 2008 IEEE CSM Outstanding Paper Award, the 2010 Hugo Schuck Best Paper Award, the 2013 SIAG/CST Best Paper Prize, and the 2014 IFAC Automatica Best Paper Award. 
\end{IEEEbiography}

\end{document}